\newcommand{\imm}{\looparrowright}
\newcommand{\be}{\begin{enumerate}}
\newcommand{\ee}{\end{enumerate}}
\newcommand{\R}{\mathbb{R}}
\newcommand{\Z}{\mathbb{Z}}
\newcommand{\co}{\colon\thinspace}
\newcommand{\stab}{\stackrel{\simeq}{\longrightarrow}}
\newtheorem*{thmI}{Theorem A}
\newtheorem{thm}{Theorem}[section]
\newtheorem{prop}[thm]{Proposition}
\newtheorem{defn}[thm]{Definition}
\newtheorem{lemma}[thm]{Lemma}
\newtheorem{rem}[thm]{Remark}
\begin{document}
\keywords{Immersion, embedding, regular homotopy, normal bordism, self-intersections}
\subjclass[2010]{57R42, 57R40 (Primary); 57R19, 57R67 (Secondary)}

\title{On self-intersection invariants}

\author{Mark Grant}

\address{School of Mathematical Sciences, The University of Nottingham,
University Park, Nottingham, NG7 2RD, UK}

\maketitle
\begin{abstract} We prove that the Hatcher-Quinn and Wall invariants of a self-transverse immersion $f\co N^n\imm M^{2n}$ coincide. That is, we construct an isomorphism between their target groups which carries one onto the other. We also employ methods of normal bordism theory to investigate the Hatcher-Quinn invariant of an immersion $f\co N^n\imm M^{2n-1}$.
\end{abstract}

\section{Introduction}

The problem of finding necessary and sufficient conditions for a given (smooth) immersion to be regularly homotopic to an embedding has been considered by many authors, going back to Whitney \cite{Whi}. In favourable cases, complete obstructions can be given in terms of the self-intersection data of the immersion. This is true of Whitney's original trick, which shows that an immersion $f\co N^n\imm M^{2n}$ with $M$ simply-connected is regularly homotopic to an embedding if and only if the algebraic sum of its double points is zero.

The non-simply-connected version of Whitney's trick was used by Wall \cite{Wa66} in the course of his pioneering work on Surgery Theory. To each immersion $f\co N^n\imm M^{2n}$ (where $N$ is now assumed to be simply-connected, but $M$ not necessarily so) Wall describes a complete obstruction to the removal of double points. This invariant (which we denote by $\mu_W(f)$ below) lives in a certain quotient of the integral group ring of $\pi_1(M)$.

Meanwhile, Shapiro \cite{Sha} and Haefliger \cite{Hae} had set about generalising Whitney's trick to higher dimensional self-intersections, using deleted product constructions.  Although their approach essentially reduces the problem to homotopy theory in the so-called meta-stable range, the invariants produced are rather difficult to compute. Later Hatcher and Quinn \cite{HQ} revisited the geometric constructions of Haefliger in the framework of bordism theory. They define, for each immersion $f\co N^n\imm M^m$, a regular homotopy invariant $\mu(f)$ in a certain normal bordism group. When $2m\geq 3(n+1)$ the vanishing of $\mu(f)$ is a necessary and sufficient condition for $f$ to be regularly homotopic to an embedding.

The Hatcher-Quinn invariants have received relatively little attention in the literature (although see the papers of Klein and Williams \cite{KW1}, \cite{KW2}, Munson \cite{Mu}, and Salikhov \cite{Sali}). This can perhaps be attributed to the difficulty of working directly with normal bordism groups, as well as the somewhat sketchy nature of the proofs in \cite{HQ} (although more complete proofs have since been given by Klein and Williams using homotopy-theoretic methods, see \cite[Appendix A]{KW1}).

The current paper has two modest aims. Firstly, we identify the Hatcher-Quinn and Wall invariants of an immersion $f\co N^n\imm M^{2n}$. Secondly, we offer some speculation as to what the analogue of Wall's invariant should be in the case of an immersion $f\co N^n\imm M^{2n-1}$. We remark that in the case of an immersion $f\co S^n\imm\R^{2n-1}$, a very satisfactory answer has been given by Ekholm
\cite{Ek} in terms of Smale invariants. Regular homotopy classes of immersions $f\co M^3\imm \R^5$ have been studied by Saeki-Sz\H ucs-Takase \cite{SST} and by Juh\' asz \cite{J}. In these dimensions the presence of knotted spheres precludes the possibility of describing the regular homotopy class of an immersion in terms of its self-intersection data.

We now give the plan of the paper. After a brief review of normal bordism theory in Section 2, we review the definitions and results of Hatcher-Quinn and Wall in Sections 3 and 4 respectively. In Section 5, we prove the following precise result.

\begin{thmI} Let $f\co N^n\imm M^{2n}$ be a self-transverse immersion, where $N$ is closed and simply-connected and $M$ is connected. Then there is an isomorphism of abelian groups
 \[
 \mathscr{F}\co H_0(\Z_2 ; \Z[\pi]) \stab  \Omega_{0}(P(f,f)_{\Z_2} ; \zeta_{\Z_2})
 \]
 under which $\mathscr{F}\big(\mu_W(f)\big) = \mu(f)$. That is, the Hatcher-Quinn and Wall invariants of $f$ coincide.
\end{thmI}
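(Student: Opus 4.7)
The plan is to identify both groups with explicit constructions built from $\pi=\pi_1(M)$, and then check that on the set of double points of $f$ the two invariants agree term by term. First I would unwind $\pi_0\bigl(P(f,f)_{\Z_2}\bigr)$. Since $P(f,f)$ is the homotopy pullback of $f$ with itself and $N$ is simply connected, the basepoint assignment $(x,\gamma,y)\mapsto \tau_x\cdot\gamma\cdot\tau_y^{-1}$, where $\tau_x,\tau_y$ are paths in $N$ from a chosen basepoint $x_0$, induces a bijection $\pi_0P(f,f)\cong\pi$. The involution swapping $x\leftrightarrow y$ and reversing $\gamma$ corresponds to $g\mapsto g^{-1}$, so after quotienting, the path components of $P(f,f)_{\Z_2}$ become the orbits of this involution on $\pi$.

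Next I would compute the virtual bundle $\zeta_{\Z_2}$ over each component. A direct calculation of the stable tangent data involved in its construction shows that, over the component labelled by $g\in\pi$, the orientation character is $w(g)\in\Z_2$ with $w=w_1(M)$, and that the descent to the $\Z_2$-quotient introduces an additional twist by $(-1)^n$ at self-dual components $g=g^{-1}$. Since $\Omega_0$ of a $0$-dimensional situation is $\Z$ when the twisting bundle is orientable and $\Z_2$ otherwise, summing contributions over orbits produces exactly the presentation $\Z[\pi]/\{g-(-1)^nw(g)g^{-1}\}=H_0(\Z_2;\Z[\pi])$. The isomorphism $\mathscr{F}$ is then defined on generators by sending the class of $g$ to a point in the $g$-component equipped with its canonical normal structure; matching the two presentations makes it a well-defined isomorphism.

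For the final statement $\mathscr{F}(\mu_W(f))=\mu(f)$, I would compare both invariants on the finite set of double points of $f$. At each $p\in M$ with preimages $x_1,x_2\in N$, the Wall contribution is $\epsilon_p\, g_p$, where $g_p=[\tau_{x_1}\cdot c_p\cdot\tau_{x_2}^{-1}]\in\pi$ and $\epsilon_p=\pm1$ is read from local orientations. The Hatcher-Quinn contribution is the bordism class of $[x_1,x_2]\in V(f)/\Z_2$ mapping to $[(x_1,c_p,x_2)]\in P(f,f)_{\Z_2}$, which lies in the $g_p$-component and is decorated with the same local orientation data. The equality of the invariants then follows by comparing $\mathscr{F}$ term by term on the double point set.

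The main obstacle will be the second step: correctly computing the virtual bundle $\zeta_{\Z_2}$ and tracking its orientation behavior over the self-dual components $g=g^{-1}$, since it is precisely there that the $(-1)^n$ sign in Wall's involution must emerge from the geometry of the $\Z_2$-quotient of $P(f,f)$. Once this orientation bookkeeping is complete, both the construction of $\mathscr{F}$ and the verification that it carries $\mu_W(f)$ to $\mu(f)$ reduce to local calculations at individual double points.
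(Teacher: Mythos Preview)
Your proposal is correct and follows essentially the same strategy as the paper: identify $\pi_0 P(f,f)$ with $\pi$, track how the involution and the orientation data on $\zeta$ produce the sign $(-1)^n w(\sigma)$, and then match the two invariants double point by double point.

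The one organizational difference is worth noting. You propose to compute $\Omega_0(P(f,f)_{\Z_2};\zeta_{\Z_2})$ directly, by analysing $w_1(\zeta_{\Z_2})$ on each component of the Borel quotient (in particular on the ``new'' loop created at a self-dual component $g=g^{-1}$). The paper instead factors the isomorphism as
\[
H_0(\Z_2;\Z[\pi]) \xrightarrow{\ \chi_*\ } H_0\bigl(\Z_2;\Omega_0(P(f,f);\zeta)\bigr) \xrightarrow{\ i_*\ } \Omega_0\bigl(P(f,f)_{\Z_2};\zeta_{\Z_2}\bigr),
\]
where the second arrow is a general fact about double covers (the tail of the Gysin sequence), and the first comes from an explicit $\Z_2$-module isomorphism $\chi\co\Z[\pi]\to\Omega_0(P(f,f);\zeta)$. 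This buys a cleaner bookkeeping: the sign calculation you flag as the main obstacle is done once, upstairs on $P(f,f)$, by computing the effect of the involution $t$ on a single generator $\chi(\sigma)=[P^0,(n_0,\gamma,n_0),\Xi]$ and reading off $(-1)^n w(\sigma)$ from the signs of the maps $\bar t$ on $TM_{\gamma(1/2)}$ and on $TN_{n_0}\oplus TN_{n_0}$. Your direct approach requires the same underlying linear algebra, just packaged as a $w_1$ computation on the quotient; either way works, and your identification of this step as the crux is accurate. The final comparison $\mathscr{F}(\mu_W(f))=\mu(f)$ is handled in the paper exactly as you outline, via an explicit homotopy between the two reference maps $\Sigma(f)\to P(f,f)_{\Z_2}$ and a sign check at each double point.
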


In Section 6, we use the Gysin sequence in normal bordism to study the Hatcher-Quinn invariant of an immersion $f\co N^n\imm M^{2n-1}$. We aim to construct an analogue of Wall's invariant, residing in a group defined in terms of the first and second homotopy groups of $M$, and depending only on the self-intersection data of $f$. The results in the final section go some way towards realizing this goal.

My sincere thanks go to Andrew Ranicki, without whose considerable encouragement this paper would not have been written, and to the anonymous referee, for suggestions improving exposition.

\section{Normal bordism}

In this section we collect some facts about normal bordism theory. These results may all be found in the paper of Salomonsen \cite{Sal}. Alternative treatments have been given by Dax \cite{Dax} and Koschorke \cite{Ko}. For simplicity, we treat only the absolute bordism groups.\\

Let $X$ be a topological space, and let $\xi = \xi^+ - \xi^-$ be a virtual vector bundle over $X$ (we do not assume that $\xi^+$ and $\xi^-$ are of the same dimension). By an {\em $n$-dimensional $\xi$-manifold over $X$} we mean a triple $\mathscr{M}=(M^n,f,F)$ consisting of a closed $n$-manifold $M^n$, a continuous map $f\co M\to X$, and an equivalence class of vector bundle isomorphisms
\begin{equation}
F\co TM\oplus f^* \xi^-\oplus\varepsilon^r \stab f^*\xi^+\oplus\varepsilon^s
\end{equation}
where $r$ and $s$ are suitable integers (here and elsewhere $\varepsilon$ denotes a trivial bundle of the stated dimension). The equivalence relation is generated by stabilisation and homotopy of bundle isomorphisms.

The negative of $\mathscr{M}$ is the triple $-\mathscr{M} = (M,f,-F)$ where
\begin{equation}
-F = F\oplus(-1)\co TM\oplus f^* \xi^-\oplus\varepsilon^r\oplus\varepsilon^1 \stab f^*\xi^+\oplus\varepsilon^s\oplus\varepsilon^1.
\end{equation}
 If $\mathscr{M} = (M,f,F)$ and $\mathscr{N}= (N,g,G)$ are $\xi$-manifolds over $X$, their disjoint union defines a $\xi$-manifold $\mathscr{M} + \mathscr{N} = (M\sqcup N, f\sqcup g, F\sqcup G)$. The empty $\xi$-manifold will be denoted $\mathscr{O} = (\emptyset,\emptyset,\emptyset)$.

We introduce a bordism relation on the set of $n$-dimensional $\xi$-manifolds $\mathscr{M} = (M,f,F)$ over $X$, as follows. We say that $\mathscr{M}\sim\mathscr{O}$ if there exists triple $(W,\varphi,\Phi)$ consisting of a compact $(n+1)$-manifold $W$ with boundary $\partial W =M$, a continuous map $\varphi\co W\to X$ such that $\varphi|_{\partial W} = f$, and a bundle isomorphism
 \begin{equation}
 \Phi\co TW\oplus \varphi^*\xi^-\oplus\varepsilon^r \stab \varphi^*\xi^+\oplus\varepsilon^s
 \end{equation}
whose restriction to $\partial W$ is equivalent to $F$ (here we use the inward pointing normal vector to make the identification $TW|_{\partial W}\cong TM\oplus\varepsilon^1$). Two $\xi$-manifolds $\mathscr{M}$ and $\mathscr{N}$ are {\em bordant}, written $\mathscr{M}\sim\mathscr{N}$, if $\mathscr{M} - \mathscr{N} \sim \mathscr{O}$. Bordism is an equivalence relation, and the set of bordism classes of $n$-dimensional $\xi$-manifolds over $X$ is denoted $\Omega_n(X;\xi)$. A group structure on $\Omega_n(X;\xi)$ is defined by setting
\begin{equation}
[\mathscr{M}]+[\mathscr{N}]=[\mathscr{M}+\mathscr{N}],\qquad -[\mathscr{M}]=[-\mathscr{M}],\qquad 0 =[\mathscr{O}].
\end{equation}
The resulting abelian group is called the {\em $n$-th normal bordism group of $X$ with coefficients in $\xi$}.

The normal bordism groups are functorial with respect to morphisms of virtual bundles. Let $\overline{h}\co \zeta\to \xi$ be a morphism of virtual bundles covering $h\co Y\to X$, and let $\mathscr{M}=(M, f, F)$ be a $\zeta$-manifold over $Y$. Then the triple $h_*\mathscr{M} = (M,h\circ f, F)$ defines a $\xi$-manifold over $X$ (here we use the canonical isomorphism $\zeta \cong h^*\xi$). This induces a homomorphism of abelian groups
\begin{equation}
h_*\co \Omega_n(Y;\zeta) \to \Omega_n(X; \xi),\qquad h_*[\mathscr{M}] = [h_*\mathscr{M}].
\end{equation}

The normal bordism groups enjoy many properties analogous to the Eilenberg-Steenrod axioms for singular homology. Here we recall a subset of these which will be needed in the sequel.\\

\noindent{\em Dimension zero.}~~There is an isomorphism
\begin{equation}
\Omega_0(X;\xi) \stab H_0(X;\Z(\xi)),
\end{equation}
where $\Z(\xi)$ denotes the local system of integer coefficients twisted by $w_1(\xi)$. In particular $\Omega_0(X;\xi)$ is a direct sum over the path components of $X$ of groups isomorphic to $\Z$ or $\Z_2$, depending on whether the restriction of $\xi$ to the corresponding component is orientable or not.\\

\noindent{\em Homotopy invariance I.}~~ Let $H$ be a homotopy between maps $h_0 , h_1\co Y\to X$. Then the following diagram commutes,
\begin{equation}
\xymatrix{
\Omega_n(Y ; h_0^*\xi) \ar[rd]^{(h_0)_*} \ar@{<->}[dd]^{\Theta_H} & \\
 & \Omega_n(X ; \xi) \\
\Omega_n(Y ; h_1^*\xi) \ar[ur]_{(h_1)_*}
} \end{equation}
where the correspondence $\Theta_H$ is given by the isomorphism $h_0^*\xi\cong h_1^*\xi$ determined by $H$.\\

\noindent{\em Homotopy invariance II.}~~Let $h\co Y\to X$ be a map such that
\begin{equation}
h_*\co \pi_i(Y,y_0)\to \pi_i(X,h(y_0))
\end{equation}
is an isomorphism for $i\leq n$ and an epimorphism for $i= n+1$, with respect to any choice of base point $y_0\in Y$. Then the induced map
\begin{equation}
h_*\co\Omega_i(Y; h^*\xi)\to \Omega_i(X;\xi)
\end{equation}
is an isomorphism for $i\leq n$ and an epimorphism for $i= n+1$.\\

\noindent{\em Gysin sequence.}~~Let $\nu$ be an orthogonal vector bundle over $X$ of rank $k$, with associated sphere bundle $p\co S\nu\to X$. There is a long exact sequence
\begin{equation}
\cdots\to\Omega_n (S\nu ; p^*\xi)\stackrel{p_*}{\longrightarrow} \Omega_n(X;\xi) \stackrel{e(\nu)}{\longrightarrow} \Omega_{n-k}(X;\xi - \nu) \stackrel{w(\nu)}{\longrightarrow} \Omega_{n-1}(S\nu;\xi)\to\cdots
\end{equation}
of normal bordism groups. The homomorphism $e(\nu)$ is called the {\em Euler mapping}.\\

We shall need to understand the Euler mapping in more detail. Let $[M,f,F]\in \Omega_n(X;\xi)$. Let $s\co M\to Ef^*\nu$ be a section transverse to the zero section $M\subseteq Ef^*\nu$. Then the zeroes of $s$ form an $(n-k)$-dimensional submanifold $N\subseteq M$. Let $g = f|_N\co N\to X$; then the normal bundle of $N$ in $M$ is isomorphic to $g^*\nu$, and restriction of $F$ to $N$ gives a bundle isomorphism
\begin{equation}
G\co TN\oplus g^*\nu\oplus g^*\xi^-\oplus\varepsilon^r \stab g^*\xi^+\oplus\varepsilon^s.
\end{equation}
We then have $e(\nu)([M,f,F]) = [N,g,G]\in \Omega_{n-k}(X;\xi - \nu)$.\\

A particular case of interest to us is the Gysin sequence associated to a double cover. Let $\pi\co \widetilde{X}\to X$ be a double cover, and let $\lambda$ be the associated line bundle over $X$, which has $S\lambda =  \widetilde{X}$. Writing $\widetilde\xi$ for $\pi^*\xi$, we obtain a Gysin sequence
\begin{equation}
\cdots\to \Omega_{n+1}(\widetilde{X};\widetilde{\xi}) \stackrel{\pi_*}{\longrightarrow}\Omega_{n+1}(X;\xi) \stackrel{e(\lambda)}{\longrightarrow} \Omega_n(X;\xi - \lambda) \stackrel{w(\lambda)}{\longrightarrow}\Omega_n (\widetilde{X} ; \widetilde\xi)\to \cdots
\end{equation}
where the map $w(\lambda)$ is induced by taking double covers. We refer the reader to \cite[Chapitre I.8]{Dax} and \cite[Section 10]{Sal} for more details.

We shall apply the Gysin sequence of a double cover to obtain information about the low dimensional $\Z_2$-equivariant normal bordism groups of a space with involution. First we recall some terminology and notation. Let $G$ be a group, and let $Y$ be a space on which $G$ acts. A {\em $G$-vector bundle} over $Y$ is a $G$-equivariant map $\xi\to Y$ which is a vector bundle in the usual sense, and such that each $\tau\in G$ induces a vector bundle map $\overline\tau\co \xi \to \xi$. A {\em virtual $G$-bundle} over $Y$ is a formal difference $\xi = \xi^+-\xi^-$ of $G$-vector bundles over $Y$.

The {\em Borel space} of $Y$ is the quotient $Y_G:= EG\times_G Y$ of $EG\times Y$ by the diagonal $G$-action, where $EG$ is a contractible space on which $G$ acts freely. This construction is functorial; in particular a $G$-vector bundle $\xi$ over $Y$ gives rise to a vector bundle $\xi_G$ over $Y_G$ of the same dimension. Similarly, given a virtual $G$-bundle $\xi= \xi^+ -\xi^-$ over $Y$ we get a virtual bundle
$\xi_G = \xi^+_G - \xi^-_G$ over $Y_G$. We then define the {\em $n$-th $G$-equivariant normal bordism group of $Y$ with coefficients in $\xi$} to be the group $\Omega_n(Y_G ; \xi_G)$.

When $G=\Z_2$ the quotient map $\pi\co E\Z_2\times Y \to Y_{\Z_2}$ is a double cover, and is homotopically equivalent to the map $i\co Y\to Y_{\Z_2}$ given by $i(y)=[e,y]$ for some choice of base-point $e\in E\Z_2$.

\begin{prop}\label{zero} Let $\xi$ be a virtual $\Z_2$-bundle over a space with involution $t\co Y\to Y$. Let $\Omega_0(Y;\xi)$ have the $\Z_2$-module structure given by
\begin{equation}
t_*\co \Omega_0(Y; \xi)\to \Omega_0(Y;\xi).
\end{equation}
Then the map $i\co Y\to Y_{\Z_2}$ induces an isomorphism of abelian groups
\begin{equation}
i_*\co H_0\big(\Z_2 ; \Omega_0(Y;\xi)\big) = \frac{\Omega_0(Y;\xi)}{\{a - t_* a\mid a\in\Omega_0(Y;\xi)\}} \stab \Omega_0(Y_{\Z_2} ; \xi_{\Z_2}).
\end{equation}
\end{prop}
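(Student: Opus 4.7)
The plan is to derive the isomorphism from the Gysin sequence of the double cover $\pi\co E\Z_2 \times Y \to Y_{\Z_2}$, which is classified by the line bundle $\lambda$. Writing $j\co Y \to E\Z_2 \times Y$ for the map $y \mapsto (e,y)$, one has $i = \pi \circ j$ and $j$ is a homotopy equivalence since $E\Z_2$ is contractible; by homotopy invariance II the map $i_*$ on $\Omega_0$ may therefore be identified with $\pi_*\co \Omega_0(E\Z_2 \times Y; \widetilde\xi) \to \Omega_0(Y_{\Z_2}; \xi_{\Z_2})$.

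First I would check that $i_*$ descends to the coinvariants. Choosing any path $\gamma$ in $E\Z_2$ from $e$ to $t\cdot e$, the formula $H(y,s) = [\gamma(s), y]$ defines a homotopy from $i \circ t$ to $i$, so homotopy invariance I gives $i_* = i_* \circ t_*$. Thus $i_*$ annihilates every element of the form $a - t_* a$ and induces a well-defined homomorphism from $H_0(\Z_2 ; \Omega_0(Y;\xi))$ to $\Omega_0(Y_{\Z_2}; \xi_{\Z_2})$.

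Next, from the low-dimensional portion of the Gysin sequence I extract the exact sequence
\begin{equation*}
\Omega_0(Y_{\Z_2};\xi_{\Z_2} - \lambda) \stackrel{w(\lambda)}{\longrightarrow} \Omega_0(E\Z_2 \times Y ; \widetilde\xi) \stackrel{\pi_*}{\longrightarrow} \Omega_0(Y_{\Z_2};\xi_{\Z_2}) \to 0,
\end{equation*}
the right-hand zero coming from $\Omega_{-1} = 0$ (equivalently, any framed point in $Y_{\Z_2}$ trivially admits a lift to the double cover). Identifying $\Omega_0(E\Z_2 \times Y;\widetilde\xi) \cong \Omega_0(Y;\xi)$ via $j$, exactness yields an isomorphism from $\Omega_0(Y;\xi)/\mathrm{im}(w(\lambda))$ onto $\Omega_0(Y_{\Z_2};\xi_{\Z_2})$ realised by $i_*$.

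The heart of the proof is then to identify $\mathrm{im}(w(\lambda))$ with the subgroup $\{a - t_* a \mid a\in \Omega_0(Y;\xi)\}$. By the geometric description of the Euler mapping recalled above, $w(\lambda)$ evaluated on a framed point $q \in Y_{\Z_2}$ is represented by its two-point preimage $\pi^{-1}(q) = \{(e,y),(t\cdot e, t\cdot y)\}$, each carrying the framing of $\widetilde\xi$ obtained from the $(\xi_{\Z_2} - \lambda)$-framing at $q$ via the canonical trivialization $\pi^*\lambda \cong \varepsilon^1$. The decisive observation is that this trivialization is supplied by the deck transformation, so the two lifts receive opposite orientations on the $\lambda$-factor and consequently opposite overall signs in their induced $\widetilde\xi$-framings. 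After transport along $j$ back to $\Omega_0(Y;\xi)$, one lift becomes the class $[y]$ and the other becomes $-t_*[y]$, giving $w(\lambda)[q] = [y] - t_*[y]$ and completing the identification $\mathrm{im}(w(\lambda)) = \{a - t_* a\}$. This orientation-sign computation is the step I expect to require the most care, since it is the only place the non-orientability of $\lambda$ enters the argument and the bookkeeping of the $(\xi_{\Z_2} - \lambda)$-framing under lifting must be carried out consistently with the stabilisation conventions set up in Section 2.
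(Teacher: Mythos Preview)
Your argument is correct and follows exactly the paper's route: extract the tail of the Gysin sequence for the double cover $E\Z_2\times Y\to Y_{\Z_2}$ and identify the image of $w(\lambda)$ with $\{a-t_*a\}$; the paper leaves this last identification as ``not difficult to check'' whereas you supply the framing computation in detail. One small terminological slip: the description you invoke for $w(\lambda)$ is not that of the Euler mapping $e(\lambda)$ but the ``taking double covers'' map that follows it in the Gysin sequence---your actual computation (two-point preimage, opposite trivializations of $\pi^*\lambda$) uses the correct description, so this does not affect the argument.
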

\begin{proof}
The Gysin sequence for the double cover $Y\simeq E\Z_2\times Y\to Y_{\Z_2}$ ends
\begin{equation}
\cdots \longrightarrow \Omega_0(Y_{\Z_2};\xi_{\Z_2} - \lambda) \stackrel{w(\lambda)}{\longrightarrow} \Omega_0(Y;\xi) \stackrel{i_*}{\longrightarrow} \Omega_0(Y_{\Z_2} ; \xi_{\Z_2}) \longrightarrow 0.
\end{equation}
It is not difficult to check that the image of $w(\lambda)$ is the subgroup $$\{a -  t_* a\mid a\in\Omega_0(Y;\xi)\}\subseteq\Omega_0(Y;\xi).$$
\end{proof}

\section{Hatcher-Quinn invariants}

In this section we recall some definitions and results of Hatcher and Quinn \cite{HQ}, who defined a regular homotopy invariant $\mu(f)$ which vanishes if (and in a certain dimension range, only if) the immersion $f$ is regularly homotopic to an embedding. We use the conventions for normal bordism groups set out in the previous section.\\

Let $f\co N^n\imm M^m$ be an immersion. The homotopy pullback
\begin{equation}
P(f,f) =\{ (x,\gamma ,y)\in N\times M^I \times N \mid f(x)=\gamma(0)\mbox{ and } f(y) = \gamma(1)\}
\end{equation}
fits into a homotopy commutative diagram
\begin{equation}
\xymatrix{
  P(f,f) \ar[r]^-{p_2} \ar[d]^{p_1} \ar[rd]^{p} &  N \ar[d]^{f} \\
  N \ar[r]^-{f}  &  M
 }
 \end{equation}
 where $p_1(x,\gamma,y)=x$, $p_2(x,\gamma,y)=y$ and $p(x,\gamma,y)=\gamma(1/2)$. It has the following universal property: if $T$ is another space with maps $\rho_1,\rho_2\co T\to N$ such that $f\rho_1 \simeq f\rho_2$, then there is a map $\phi\co T\to P(f,f)$, unique up to homotopy, such that $p_1\phi\simeq\rho_1$ and $p_2\phi\simeq\rho_2$.

Now suppose that $f\co N^n\imm M^m$ is self-transverse, and $N$ is closed. Then the space
 \begin{equation}
 \overline{\Sigma}(f) = \{ (x,y)\in N\times N \mid f(x)=f(y)\mbox{ and } x\neq y \}.
 \end{equation}
 is a closed submanifold of $N\times N$ of dimension $2n-m$, the so-called {\em self-intersection manifold} of $f$. The projections $\rho_1,\rho_2\co \overline{\Sigma}(f)\to N$ ensure that there is a homotopy commutative diagram
\begin{equation}
\xymatrix{
 \overline{\Sigma}(f)\ar[rd]^{\bar{\phi}} \ar@/^/[rrd]^{\rho_2} \ar@/_/[rdd]_{\rho_1} &    &     \\
         &   P(f,f) \ar[r]^{p_2} \ar[d]^{p_1} \ar[rd]^{p} &  N \ar[d]^{f} \\
   &   N \ar[r]^-{f}  &  M,
 }
 \end{equation}
 where $\bar{\phi}(x,y) = (x,c_{f(x)},y)$ for $c_{f(x)}$ the constant path at $f(x)$.

 Let $\psi\co \overline{\Sigma}(f)\to M$ be the composition $p\bar{\phi}$, so $\psi(x,y) = f(x)=f(y)$. The self-intersection manifold fits into a pullback diagram
\begin{equation}\label{pullback}
\xymatrix{
\overline{\Sigma}(f) \ar@{^{(}->}[d]^{i} \ar[rr]^\psi & & M \ar[d]^{\triangle_M} \\
N\times N - \triangle_N (N) \ar[rr]^{f\times f|} & & M\times M,
}
\end{equation}
where for a space $X$ we denote by $\triangle_X\co X\to X\times X$ the diagonal map $x\mapsto (x,x)$. The embedding $i\co \overline{\Sigma}(f)\hookrightarrow N\times N$ factors as $(\rho_1\times\rho_2)\triangle_{\overline{\Sigma}(f)}$. We therefore have a sequence of vector bundle isomorphisms
\begin{align*}
T\overline{\Sigma}(f)\oplus \psi^*TM      & \cong  T\overline{\Sigma}(f)\oplus \psi^*\nu_{\triangle_M} \\
                                          & \cong  T\overline{\Sigma}(f)\oplus \nu_i \\
                                          & \cong  i^*T(N\times N - \triangle N) \\
                                          & \cong  \rho_1^*TN\oplus\rho_2^*TN,
\end{align*}
where $\nu$ denotes a normal bundle. Now note that each of the maps $\psi$, $\rho_1$ and $\rho_2$ factor through $\bar{\phi}\co \overline{\Sigma}(f)\to P(f,f)$, and so we have constructed a bundle isomorphism
 \begin{equation}
 \overline{\Phi}\co T\overline{\Sigma}(f)\oplus \bar{\phi}^*p^*TM\stab  \bar{\phi}^*(p_1^*TN\oplus p_2^*TN).
 \end{equation}
It follows that the self-intersection manifold of $f$ represents an element
\begin{equation}
[ \overline{\Sigma}(f), \bar{\phi}, \overline{\Phi}]\in \Omega_{2n-m}( P(f,f) ; \zeta),\qquad \zeta = p_1^*TN\oplus p_2^*TN - p^*TM.
\end{equation}
 In order that we do not count each double point twice, however, we must factor out by the action of the cyclic group $\Z_2$, which acts on all the manifolds in diagram (\ref{pullback}) by swapping factors. In particular $\Z_2$ acts freely on $\overline{\Sigma}(f)$, with quotient
 \begin{equation}
\Sigma(f) = \overline{\Sigma}(f)/\Z_2 = \{ [x,y]\mid (x,y)\in\overline{\Sigma}(f)\}
 \end{equation}
the so-called {\em double-point manifold} of $f$. Let $e\co \overline{\Sigma}(f)\to E\Z_2 = S^\infty$ classify the double cover $\pi\co\overline{\Sigma}(f)\to \Sigma(f)$, and define a map
\begin{equation}
\phi\co \Sigma(f) \to P(f,f)_{\Z_2},\qquad \phi[x,y] = [e(x,y) , \bar\phi (x,y)]=[e(x,y) ,(x, c_{f(x)}, y)].
\end{equation}
There is an involution
\begin{equation}
t\co P(f,f)\to P(f,f),\qquad t(x,\gamma, y) = (y, \overline\gamma, x),\quad \overline\gamma(t) = \gamma(1-t)
\end{equation}
which is covered by the bundle involutions
$$ \begin{array}{cc}
 \bar{t}\co p^*TM\to p^*TM,  &  \bar{t}(v)= -v,\\
 \bar{t}\co p_1^*TN\oplus p_2^*TN\to p_1^*TN\oplus p_2^*TN,  &  \bar{t}(v_1,v_2)= (v_2,v_1).
 \end{array}$$
Factoring out by the $\Z_2$-action, we find that $\overline{\Phi}$ induces a stable bundle isomorphism
\begin{equation}
\Phi\co T\Sigma(f)\oplus \phi^*(p^*TM)_{\Z_2} \stab \phi^* (p_1^*TN\oplus p_1^*TN)_{\Z_2}.
\end{equation}
  \begin{defn}[Hatcher-Quinn \cite{HQ}]\label{HQdef}  Let $f\co N^n\imm M^m$ be a self-transverse immersion with $N$ closed. The {\em Hatcher-Quinn invariant} of $f$ is the normal bordism class
\begin{equation}
\mu(f) =[\Sigma(f), \phi, \Phi] \in \Omega_{2n-m}\big( P(f,f)_{\Z_2} ; \zeta_{\Z_2}\big),
\end{equation}
where $\zeta_{\Z_2}$ is the virtual vector bundle $(p_1^*TN\oplus p_2^*TN)_{\Z_2} - (p^*TM)_{\Z_2}$.
\end{defn}
\begin{rem} {\em If $f$ is not self-transverse, then we define $\mu(f)=\mu(f')$, where $f'\co N\imm M$ is a self-transverse immersion regularly homotopic to $f$. This is well-defined by the following result.}
\end{rem}
\begin{thm}[{Hatcher-Quinn \cite[Theorem 2.3]{HQ}}]\label{HQ}
The class $\mu(f)$ is a regular homotopy invariant. If $2m\geq 3(n+1)$ and $\mu(f)=[\mathscr{N}]$ for some singular $\zeta_{\Z_2}$-manifold $\mathscr{N}=(N^{2n-m}, \gamma, \Gamma)$ in $P(f,f)_{\Z_2}$, then $f$ is regularly homotopic to an immersion $g$ with $\Sigma(g) = N$. In particular, $\mu(f)=0$ if and only if $f$ is regularly homotopic to an embedding.
\end{thm}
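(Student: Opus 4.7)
The plan is to prove the two statements separately: first the regular homotopy invariance of $\mu(f)$, and then the geometric realization in the meta-stable range, from which the embedding criterion follows by taking $\mathscr{N}=\mathscr{O}$.

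For regular homotopy invariance, I would consider self-transverse immersions $f_0,f_1\co N\imm M$ connected by a regular homotopy $H\co N\times I\to M$. The natural object to study is the track $\tilde H\co N\times I\imm M\times I$, $\tilde H(x,t)=(H(x,t),t)$, which after a perturbation rel boundary becomes a self-transverse immersion one dimension higher. Its self-intersection manifold $\overline\Sigma(\tilde H)$ is a $(2n-m+1)$-dimensional cobordism between $\overline\Sigma(f_0)$ and $\overline\Sigma(f_1)$, and the construction of Section~3 applied to $\tilde H$---using the homotopy equivalence $P(\tilde H,\tilde H)\simeq P(f,f)$ together with the canonical splitting $T(M\times I)\cong TM\oplus\varepsilon^1$---should yield a $\zeta_{\Z_2}$-bordism realising $\mu(f_0)=\mu(f_1)$.

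For the realization statement, the idea is to reverse-engineer a regular homotopy from an abstract bordism. Given $\mu(f)=[\mathscr{N}]$ there is a $\zeta_{\Z_2}$-bordism $W$ from $\Sigma(f)$ to $\mathscr{N}$, equipped with a map to $P(f,f)_{\Z_2}$ and a stable framing extending $\Phi$ and $\Gamma$. Passing to the double cover $\overline W\to W$ yields a manifold carrying exactly the structural data of the self-intersection track of some regular homotopy of $f$. The goal is to realize $\overline W$ geometrically as this track, producing $G\co N\times I\imm M$ with $G(\cdot,0)=f$ and $\overline\Sigma(\tilde G)=\overline W$, so that $g:=G(\cdot,1)$ satisfies $\Sigma(g)=N$.

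The hardest part---and the step requiring the hypothesis $2m\geq 3(n+1)$---is this geometric realization. Three distinct obstructions appear: triple points of $f$, of expected dimension $3n-2m$, must be removable by general position; the cobordism $\overline W$ must be embeddable in $N\times N\times I$ compatibly with its boundary data; and the Whitney moves used to modify $f$ must be implementable along embedded Whitney discs dictated by the framing. The meta-stable hypothesis forces $3n-2m\leq -3$, making the first two generic arguments routine, and places the relevant obstruction-theoretic computations in a stable range where Whitney's trick can be applied freely. One then constructs $G$ by performing a sequence of Whitney moves on $f$ guided by a handle decomposition of $W$: adding a pair of double points for each incoming handle and cancelling a pair for each outgoing handle, with the framing prescribing precisely which Whitney discs to use. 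This is the strategy sketched in \cite{HQ}; a fully rigorous homotopy-theoretic translation is provided by Klein--Williams in \cite[Appendix~A]{KW1}, whose approach I would follow for a complete write-up. The embedding criterion then follows by setting $\mathscr{N}=\mathscr{O}$: embeddings have empty self-intersection, so $\mu(f)=0$ exactly when $f$ is regularly homotopic to one.
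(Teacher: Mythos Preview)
The paper does not prove this theorem at all: it is stated with attribution to Hatcher--Quinn \cite[Theorem~2.3]{HQ}, and the only accompanying text is the remark that an alternative approach appears in Klein--Williams \cite{KW1,KW2}. There is therefore nothing in the paper to compare your argument against.

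Your sketch is a reasonable outline of the standard strategy (track of a regular homotopy for invariance; handle-by-handle Whitney moves guided by a bordism for realization), and you yourself defer to \cite{HQ} and \cite[Appendix~A]{KW1} for the details. That is exactly what the paper does. If you intend this as an actual proof rather than a pointer to the literature, be aware that the realization step is genuinely delicate---the ``reverse-engineering'' of a regular homotopy from an abstract $\zeta_{\Z_2}$-bordism is the heart of \cite{HQ} and is not something one can fill in routinely---so your proposal as written is a plan rather than a proof.
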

For an alternative approach to this result, see the papers of Klein and Williams \cite{KW1,KW2} on homotopical intersection theory.

\section{Wall's invariant}

In order to investigate the possibility of performing surgery in the middle dimension on non-simply-connected manifolds, C.\ T.\ C.\ Wall \cite{Wa66} defined an obstruction to a given immersion $f\co S^n \imm M^{2n}$ being regularly homotopic to an embedding. Wall's obstruction is complete when $n\geq 3$. In this section we briefly recall the construction, following \cite{Wa66} (see also \cite[Chapter 5]{Wa99}).\\

Wall's invariant $\mu_{W}(f)$ for a self-transverse immersion $f\co N^n\to M^{2n}$, where $N$ is closed and simply-connected and $M$ is connected, may be described as follows. Choose once and for all a base-point $n_0\in N$ for which $f^{-1}(\{f(n_0)\}) = \{ n_0\}$, and let $m_0=f(n_0)$ be the base-point of $M$. Wall's obstruction lives in a quotient of the integral group ring $\Z[\pi]$ of $\pi=\pi_1(M,m_0)$. In particular, if $w\co\pi\to\{\pm 1\}$ is the orientation character of $M$, then we may define an involution on the group ring,
\begin{equation}\label{involution}
\overline{(\;)}\co\Z[\pi]\to\Z[\pi],\qquad \sum_{\sigma\in\pi} n_\sigma \sigma \mapsto  \sum_{\sigma\in\pi} (-1)^nw(\sigma)n_\sigma \sigma^{-1}.
\end{equation}
This makes $\Z[\pi]$ a $\Z_2$-module, and $\mu_{W}(f)$ will be an element of the group of co-invariants
\begin{equation}
 H_0(\Z_2 ; \Z[\pi]) = \frac{\Z[\pi]}{\{ a - \overline{a}\mid a\in\Z[\pi]\} }.
\end{equation}

The above conditions on $f\co N\imm M$ ensure that the self-intersection $\overline{\Sigma}(f)$ and the double-point manifold $\Sigma(f)$ each consist of a finite number of points. Each double point $[x,y]\in \Sigma(f)$ may be lifted to a self-intersection $(x,y)\in \overline{\Sigma}(f)$ by an arbitrary choice of ordering. For each self-intersection we define an element $\sigma_{(x,y)}\in\pi$ and a sign $\varepsilon_{(x,y)}\in\{\pm 1\}$ as follows. Choose paths $\gamma_x,\gamma_y$ in $N$ from $n_0$ to $x$ and $y$ respectively which avoid other self-intersection points of $f$. Then $\sigma_{(x,y)}\in \pi$ is defined to be the homotopy class of the loop $f\gamma_x\cdot f\overline{\gamma_y}$ in $M$ based at $m_0$. Note that changing the order of $x$ and $y$ reverses the loop, so $\sigma_{(y,x)}={\sigma_{(x,y)}}^{-1}$. To define the sign, fix orientations of $N$ at $n_0$ and $M$ at $m_0$. The tangent spaces $TN_x$ and $TN_y$ become oriented by transport along $\gamma_x$ and $\gamma_y$ of the orientation of $TN_{n_0}$. Set $\varepsilon_{(x,y)}$ to equal $1$ if the orientation of $df(TN_x)$ followed by that of $df(TN_y)$ agrees with the transport of the orientation of $TM_{m_0}$ along $f\gamma_x$, and equal to $-1$ otherwise. Note that $\varepsilon_{(y,x)}=(-1)^n w(\sigma_{(x,y)})\varepsilon_{(x,y)}$.

\begin{defn}  Let $f\co N^n\imm M^{2n}$ be a self-transverse immersion, with $N$ closed and simply-connected and $M$ connected. The {\em Wall invariant} of $f$ is the well-defined class $\mu_{W}(f)\in H_0(\Z_2 ; \Z[\pi])$ represented by the finite sum
\begin{equation}
\tilde\mu_{W}(f) = \sum_{[x,y]\in \Sigma(f)} \varepsilon_{(x,y)} \sigma_{(x,y)} \in \Z[\pi] .
\end{equation}
\end{defn}
\begin{thm}[{Wall \cite[Theorem 3.1]{Wa66}}]\label{Wall}
The class $\mu_W(f)$ is a regular homotopy invariant. If $f$ is regularly homotopic to an embedding, then $\mu_{W}(f)=0$. Conversely, if $n\geq 3$ and $\mu_{W}(f)=0$, then $f$ is regularly homotopic to an embedding.
\end{thm}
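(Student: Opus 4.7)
The plan is to establish the three assertions in turn: regular-homotopy invariance, vanishing on embeddings, and the geometric converse via the Whitney trick.

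For invariance, I would take a regular homotopy $F\co N\times I\imm M$ between self-transverse immersions $f_0$ and $f_1$, and perturb $F$ generically rel boundary so that the self-intersection locus in the complement of the diagonal in $N\times I\times N$ is a smooth $1$-manifold whose projection to $I$ has only fold singularities. At each fold-critical time $t_i$, a standard local model describes the birth (or death) of a pair of double points. Inspecting that model one checks that the two born-together double points lift to an intersection pair whose associated group elements agree and whose signs are opposite, or, depending on ordering, contribute an expression of the form $a-\overline{a}$ to $\tilde\mu_W$. Such expressions vanish in $H_0(\Z_2;\Z[\pi])$, so $\mu_W(f_0)=\mu_W(f_1)$. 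The twisted involution identity $\varepsilon_{(y,x)}=(-1)^nw(\sigma_{(x,y)})\varepsilon_{(x,y)}$ is precisely what arranges the cancellation.

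The second assertion is immediate: an embedding has $\Sigma(f)=\emptyset$, hence $\tilde\mu_W(f)=0$.

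For the converse, assume $n\geq 3$ and $\mu_W(f)=0$, so $\tilde\mu_W(f)=b-\overline{b}$ for some $b\in\Z[\pi]$. Using local ``inverse Whitney'' moves I can introduce auxiliary cancelling double-point pairs --- each such modification changes $\tilde\mu_W$ only by an element of the form $a-\overline{a}$, so does not alter $\mu_W$ --- reducing to the case where the double points of $f$ pair up into geometric cancelling pairs $\{[x_1,y_1],[x_2,y_2]\}$ with $\sigma_{(x_1,y_1)}=\sigma_{(x_2,y_2)}$ and $\varepsilon_{(x_1,y_1)}=-\varepsilon_{(x_2,y_2)}$. For such a pair I would use simple-connectedness of $N$ to pick arcs $\alpha$ from $x_1$ to $x_2$ and $\beta$ from $y_1$ to $y_2$ in $N$; the loop $f\alpha\cdot\overline{f\beta}$ in $M$ then represents $\sigma_{(x_1,y_1)}^{-1}\sigma_{(x_2,y_2)}=1\in\pi$, so bounds a Whitney disc $D\subset M$. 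Standard general-position arguments, using $2n\geq 5$ and $2n\geq n+3$ (both implied by $n\geq 3$), place $D$ embedded in $M$ and disjoint from $f(N)$ except along its two boundary arcs; the opposite-signs hypothesis furnishes the normal framing required by the Whitney move, which is a regular homotopy supported in a tubular neighbourhood of $D$ that removes both double points. Iterating over all paired points gives a regular homotopy from $f$ to an embedding.

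The main obstacle is implementing the Whitney trick itself: constructing a framed, embedded Whitney disc requires the dimension hypothesis $n\geq 3$ (to embed a $2$-disc into $M^{2n}$ in general position with respect to the $n$-dimensional image $f(N)$), simple-connectedness of $N$ (to realise the arcs $\alpha,\beta$), and the careful matching of signs and group elements at the two double points (to supply a consistent framing along $\partial D$). A secondary delicate point, arising in the invariance step, is to track how the ordering ambiguity in the lift from $\Sigma(f)$ to $\overline{\Sigma}(f)$ interacts with the orientation transport along $\gamma_x,\gamma_y$, so that birth/death events contribute precisely elements of the form $a-\overline{a}$.
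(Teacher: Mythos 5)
The paper does not prove this theorem: it is stated as a citation of Wall's original work, [Wa66, Theorem 3.1], so there is no internal proof to compare against. Your proposal is therefore a blind reconstruction of Wall's argument, and it correctly follows the non-simply-connected Whitney-trick route that Wall actually uses. The invariance argument (generic regular homotopy, $1$-dimensional self-intersection cobordism, fold-type birth/death contributing terms that die in the quotient by $\{a-\overline a\}$) and the observation that $\Sigma(f)=\emptyset$ for embeddings are both sound.

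Two points in the converse are thinner than they should be, though neither is a fatal gap. First, the reduction to geometric cancelling pairs needs a small case analysis which your inverse-Whitney sentence doesn't quite supply. For $\sigma$ self-conjugate (i.e.\ $\sigma=\overline\sigma$ for the involution (\ref{involution})), the coefficient of $\sigma$ in any element of the form $a-\overline a$ vanishes identically, so the double points of type $\sigma$ already have signs summing to zero and pair off without introducing anything. For $\sigma\neq\overline\sigma$ the condition $\tilde\mu_W(f)=b-\overline b$ only balances the $\sigma$- and $\overline\sigma$-coefficients jointly, and here one genuinely must use that an inverse Whitney move can be performed along a disc whose boundary loop realizes any prescribed element of $\pi$, inserting a pair with chosen $\sigma$ and opposite signs; after that the pairing becomes geometric. (Your parenthetical "changes $\tilde\mu_W$ only by an element of the form $a-\overline a$" is also slightly off: with consistent lift choices an inverse Whitney move adds $\varepsilon\sigma-\varepsilon\sigma=0$, and the $a-\overline a$ slack is only the usual lift ambiguity.) Second, "the opposite-signs hypothesis furnishes the normal framing" compresses a genuine step: one needs the framing obstruction for the normal bundle of $D^2$ in $M^{2n}$, living in $\pi_1(SO(2n-2))\cong\Z_2$ for $n\geq 3$, to vanish; opposite signs are exactly what kill it, and this is a second place where $n\geq 3$ enters beyond the general-position dimension counts you list.
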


\section{Proof of Theorem A}

In this section we prove that the Hatcher-Quinn and Wall invariants of a self-transverse immersion $f\co N^n\imm M^{2n}$, where $N$ is closed and simply-connected and $M$ is connected, reside in isomorphic groups and correspond under this isomorphism.\\

Let $\Z[\pi]$ be the the $\Z_2$-module described in Section 4, where $\pi = \pi_1(M,m_0)$. Recall that the normal bordism group $\Omega_0(P(f,f) ;\zeta)$ (see Section 3) also has the structure of a $\Z_2$-module, given by the involution
\begin{equation}
t_*\co \Omega_0(P(f,f) ;\zeta) \to \Omega_0(P(f,f) ;\zeta),
\end{equation}
where $t\co P(f,f)\to P(f,f)$ is the involution $t(x,\gamma,y) = (y,\overline{\gamma},x)$.
\begin{lemma}\label{modules}
There is an isomorphism of $\Z_2$-modules
\begin{equation}
\chi\co\Z[\pi] \stab \Omega_0(P(f,f) ;\zeta).
\end{equation}
\end{lemma}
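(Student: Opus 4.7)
The plan is to describe $\Omega_0(P(f,f); \zeta)$ explicitly by decomposing it along path components, identifying these components with $\pi$, and then matching the $\Z_2$-actions. First I would analyze the homotopy type of $P(f,f)$. The endpoint projection $P(f,f) \to N \times N$ has homotopy fibre $\Omega M$, and since $\pi_1(N \times N) = 0$ the long exact sequence gives a bijection $\pi_0(P(f,f)) \cong \pi_0(\Omega M) = \pi$. For each $\sigma \in \pi$ choose a loop $\gamma_\sigma$ at $m_0$ representing $\sigma$ (with $\gamma_e = c_{m_0}$); then $(n_0, \gamma_\sigma, n_0)$ lies in the $\sigma$-component $C_\sigma$. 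Since $N$ is orientable we have $w_1(\zeta) = p^* w_1(TM)$; the group $\pi_1(C_\sigma)$ is a quotient of $\pi_1(\Omega M, \gamma_\sigma) \cong \pi_2(M)$, and the evaluation $\mathrm{ev}_{1/2}\co \Omega M \to M$ induces zero on $\pi_1$ (a loop of loops evaluated at $1/2$ sweeps out the equator of a sphere in $M$). Hence $\zeta$ is orientable on each component, and the dimension zero axiom yields $\Omega_0(P(f,f); \zeta) \cong \bigoplus_{\sigma \in \pi} \Z$.

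Next I would define $\chi$ explicitly. Fix orientations of $T_{n_0}N$ and $T_{m_0}M$; transporting the latter along $\gamma_\sigma|_{[0,1/2]}$ orients $T_{\gamma_\sigma(1/2)}M$. Let $\phi_\sigma\co \{\mathrm{pt}\} \to P(f,f)$ have image $(n_0, \gamma_\sigma, n_0)$, and let $\Phi_\sigma\co T_{\gamma_\sigma(1/2)}M \oplus \varepsilon^r \to T_{n_0}N \oplus T_{n_0}N \oplus \varepsilon^s$ be the (essentially unique) orientation-preserving stable bundle isomorphism. Set $\chi(\sigma) := [\{\mathrm{pt}\}, \phi_\sigma, \Phi_\sigma]$ and extend $\Z$-linearly. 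By construction $\chi(\sigma)$ generates the $\Z$-summand on $C_\sigma$, so $\chi$ is an isomorphism of underlying abelian groups.

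Finally I would verify $\Z_2$-equivariance, i.e.\ that $t_* \chi(\sigma) = \chi(\overline{\sigma}) = (-1)^n w(\sigma)\, \chi(\sigma^{-1})$. Choosing $\gamma_{\sigma^{-1}} := \overline{\gamma_\sigma}$ makes $t \circ \phi_\sigma = \phi_{\sigma^{-1}}$ at the level of maps. The pushforward bundle iso under $t_*$ is $\mathrm{swap} \circ \Phi_\sigma \circ (-\mathrm{Id})$, with the swap coming from $\bar{t}$ on $p_1^*TN \oplus p_2^*TN$ and the $-\mathrm{Id}$ from $\bar{t}$ on $p^*TM$. This differs from an orientation-preserving iso by sign $(-1)^n$, since swap on $\R^n \oplus \R^n$ has determinant $(-1)^n$ while $-\mathrm{Id}$ on a $2n$-dimensional space has determinant $+1$. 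Moreover this sign is measured with respect to the forward-half transport of the orientation of $T_{m_0}M$, whereas $\Phi_{\sigma^{-1}}$ uses the backward-half transport, differing by the full-loop monodromy $w(\sigma)$. Combining, $t_* \chi(\sigma) = (-1)^n w(\sigma)\, \chi(\sigma^{-1}) = \chi(\overline{\sigma})$, as required. The main obstacle is precisely this sign computation: one must simultaneously account for the $w(\sigma)$ coming from monodromy around $\gamma_\sigma$ and the $(-1)^n$ coming from the swap of the two copies of $TN$.
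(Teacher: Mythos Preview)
Your proof is correct and takes essentially the same approach as the paper. Both arguments use the fibration $P(f,f)\to N\times N$ with fibre the based loop space of $M$ to identify $\pi_0(P(f,f))\cong\pi$, establish orientability of $\zeta$ on each component (the paper does this via the slightly stronger observation that $\iota^*\zeta$ is actually trivial, using the explicit null-homotopy $\gamma\mapsto\gamma\big((1-t)\tfrac12\big)$ of the midpoint evaluation, which is cleaner than your ``equator'' picture), define the same explicit generators $\chi(\sigma)$, and carry out the identical sign computation for $\Z_2$-equivariance.
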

\begin{proof}
Consider the fibration $(p_1,p_2)\co P(f,f)\to N\times N$ with fibre $\Lambda M = \Lambda(M,m_0)$ the based loop space of $M$. Since $N$ is simply-connected, the fibre inclusion
\begin{equation}
\iota\co \Lambda M\to P(f,f),\qquad \iota(\gamma) = (n_0, \gamma, n_0)
\end{equation}
induces an isomorphism $\iota_*\co \pi_0(\Lambda M) \stab \pi_0(P(f,f))$, and hence induces an isomorphism
\begin{equation}
\iota_*\co \Omega_0(\Lambda M ; \iota^*\zeta) \stab\Omega_0(P(f,f) ; \zeta).
\end{equation}
Now $\iota^*\zeta = c^*TN\oplus c^*TN - \mathrm{ev}^*TM$, where $c\co \Lambda M\to N$ is constant at $n_0$ and the evaluation map $\mathrm{ev}\co \Lambda M\to M$ given by $\mathrm{ev}(\gamma)= \gamma(1/2)$ is null-homotopic via the homotopy $\gamma \mapsto \gamma\big( (1-t)1/2\big)$. Hence $\iota^*\zeta$ is a trivial virtual bundle, and in particular is orientable over each path-component $\Lambda M_\sigma \subseteq \Lambda M$. Thus there are isomorphisms of abelian groups
\begin{equation}
\Z[\pi]\cong \Omega_0(\Lambda M ; \iota^*\zeta) \cong\Omega_0(P(f,f) ; \zeta).
\end{equation}
We give an explicit isomorphism $\chi\co \Z[\pi] \stab \Omega_0(P(f,f); \zeta)$ by choosing a generator $\chi(\sigma)\in\Omega_0(P(f,f) ; \zeta)$ for each $\sigma\in\pi$, and show that $\chi$ is a map of $\Z_2$-modules.

Let $\gamma$ be a loop in $M$ representing $\sigma$. Fix orientations for the tangent spaces $TN_{n_0}$ and $TM_{m_0}$. These induce orientations of $TM_{\gamma(1/2)}$ by parallel transport along the first half of $\gamma$, and of $TN_{n_0}\oplus TN_{n_0}$ by direct sum. We then set
\begin{equation}
\chi(\sigma) = [P^0,(n_0,\gamma,n_0), \Xi], \qquad \Xi\co TM_{\gamma(1/2)} \stab TN_{n_0}\oplus TN_{n_0},
\end{equation}
where $P^0$ is a point and $\Xi$ is orientation preserving. It is easy to see that $\chi(\sigma)\in \Omega_0(P(f,f);\zeta)$ does not depend on the choices of $\gamma$ and $\Xi$.

Let $t\co P(f,f)\to P(f,f)$ be the involution. In order to show that $\chi$ is a $\Z_2$-module map, we must show that $t_*\chi(\sigma) = (-1)^nw(\sigma)\chi(\sigma^{-1})$. Now $t_*\chi(\sigma)=[P,(n_0,\overline\gamma, n_0), \Psi]$, where $\Psi$ is the vector space isomorphism determined by the diagram
\begin{equation}
\xymatrix{
TM_{\overline\gamma(1/2)} \ar[d]^{\bar{t}} \ar[r]^-{\Psi} &  TN_{n_0}\oplus TN_{n_0} \ar[d]^{\bar{t}} \\
TM_{\gamma(1/2)} \ar[r]^-{\Xi} &  TN_{n_0}\oplus TN_{n_0}.
}
\end{equation}
Orient $TM_{\overline\gamma(1/2)}$ by parallel transport along the first half of $\overline{\gamma}$. We wish to determine the sign of the linear map $\Psi$. If $TM_{\gamma(1/2)}$ is oriented by transport along the {\em first} half of $\gamma$, then the linear map $\bar{t}=(-1)\co TM_{\overline\gamma(1/2)}\to TM_{\gamma(1/2)}$ has sign $(-1)^{2n}w(\sigma) = w(\sigma)$. The map $\Xi$ has sign $+1$. The map $\bar{t}\co TN_{n_0}\oplus TN_{n_0}\to TN_{n_0}\oplus TN_{n_0}$ which swaps factors has sign $(-1)^{n^2}=(-1)^n$. Thus $\Psi$ has sign $(-1)^nw(\sigma)$, and $t_*\chi(\sigma) = (-1)^nw(\sigma)\chi(\sigma^{-1})$ as claimed.
\end{proof}

Combining this lemma with Proposition \ref{zero}, we have group isomorphisms
\begin{equation}
\xymatrix{
H_0(\Z_2 ; \Z[\pi])\ar[r]^-{\chi_*} & H_0\big(\Z_2 ; \Omega_0(P(f,f) ; \zeta)\big)\ar[r]^-{i_*} &  \Omega_0\big( P(f,f)_{\Z_2} ; \zeta_{\Z_2}),
}\end{equation}
where $i\co P(f,f)\to P(f,f)_{\Z_2}$ is given by $i(x,\gamma, y) = [e, (x,\gamma, y)]$ for some base point $e\in E\Z_2$. Set $\mathscr{F} = i_*\circ\chi_*$. The proof of Theorem A is completed by the following lemma.
\begin{lemma}
$\mathscr{F}\big(\mu_W(f)\big) = i_*\left[\chi\,\tilde{\mu}_W(f)\right] = \mu(f).$
\end{lemma}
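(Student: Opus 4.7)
The plan is to exploit the $0$-dimensionality of $\Sigma(f)$ (since $2n-m=0$) to decompose both sides as finite sums over double points and match contributions individually. Using linearity of $\chi$, one has
\[
\mu(f) = \sum_{[x,y]\in\Sigma(f)} \bigl[P^0,\phi[x,y],\Phi_{[x,y]}\bigr],
\qquad
i_*\bigl[\chi\,\tilde\mu_W(f)\bigr] = \sum_{[x,y]\in\Sigma(f)} \varepsilon_{(x,y)}\cdot i_*\chi(\sigma_{(x,y)}),
\]
so it suffices to establish the term-wise identity $[P^0,\phi[x,y],\Phi_{[x,y]}] = \varepsilon_{(x,y)}\cdot i_*\chi(\sigma_{(x,y)})$ in $\Omega_0(P(f,f)_{\Z_2};\zeta_{\Z_2})$ for each double point $[x,y]$.

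Fix a lift $(x,y)\in\overline{\Sigma}(f)$ and work upstairs in $P(f,f)$. I would build an explicit path $H\co [0,1]\to P(f,f)$ from $(x,c_{f(x)},y)$ at $t=0$ to $(n_0,\gamma_{(x,y)},n_0)$ at $t=1$, where $\gamma_{(x,y)} = f\gamma_x \cdot f\overline{\gamma_y}$ represents $\sigma_{(x,y)}$. A natural choice is $H(t) = (\gamma_x(1-t),\delta_t,\gamma_y(1-t))$ with $\delta_t$ the concatenation of the segment of $f\gamma_x$ from $f\gamma_x(1-t)$ forward to the self-intersection point $f(x)=f(y)$ with the segment of $f\overline{\gamma_y}$ from that point on to $f\gamma_y(1-t)$. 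Crucially, $\delta_t(1/2) = f(x)$ for all $t$, so $p\circ H$ is constant at $f(x)$: parallel transport in $p^*TM$ along $H$ is the identity on $TM_{f(x)}$, while $p_1^*TN$ and $p_2^*TN$ transport $TN_x, TN_y$ to $TN_{n_0}$ via $\overline{\gamma_x}, \overline{\gamma_y}$. With respect to Wall's orientation convention (orient $TN_x, TN_y, TM_{f(x)}$ by parallel transport of the fixed orientations on $TN_{n_0}, TM_{m_0}$ along $\gamma_x, \gamma_y, f\gamma_x$), these transports are orientation-preserving.

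Composing $H$ with the inclusion $P(f,f)\to P(f,f)_{\Z_2}$ shows both representatives lie in a common component, and the equality of the two classes reduces to comparing the transported $\overline{\Phi}_{(x,y)}$ with $\Xi$. Since $\Xi$ is orientation-preserving by construction of $\chi$ and the transports preserve Wall's orientations, the needed sign is simply that of $\overline{\Phi}_{(x,y)}\co TM_{f(x)}\to TN_x\oplus TN_y$ relative to Wall's orientations. The main obstacle is the careful sign analysis: tracing the defining chain
\[
T\overline{\Sigma}(f)\oplus\psi^*TM \cong T\overline{\Sigma}(f)\oplus\psi^*\nu_{\triangle_M} \cong T\overline{\Sigma}(f)\oplus\nu_i \cong i^*T(N\times N) \cong \rho_1^*TN\oplus\rho_2^*TN
\]
at the point $(x,y)$, where $T\overline{\Sigma}(f)_{(x,y)}=0$ and transversality yields $TM_{f(x)}=df(TN_x)\oplus df(TN_y)$, one identifies this sign (with the standard conventions for $TM\cong\nu_{\triangle_M}$ and the tangent-normal splittings) with that of the isomorphism $TN_x\oplus TN_y\to TM_{f(x)}$, $(v_1,v_2)\mapsto df(v_1)+df(v_2)$, which is $\varepsilon_{(x,y)}$ by Wall's definition. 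Applying $i_*$ and summing over $[x,y]\in\Sigma(f)$ then yields the desired equality $\mu(f) = \mathscr{F}(\mu_W(f))$.
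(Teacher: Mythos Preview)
Your argument is correct and follows essentially the same route as the paper's: both construct the same explicit homotopy (your $H(t)=(\gamma_x(1-t),\delta_t,\gamma_y(1-t))$ is the paper's $H$ run in reverse, modulo the $E\Z_2$ coordinate, which the paper tracks with an auxiliary path $\omega_{(x,y)}$ but which you may safely ignore by contractibility), and both reduce the identification to the same sign check of $\overline{\Phi}$ against Wall's $\varepsilon_{(x,y)}$. The only cosmetic differences are that you work one double point at a time whereas the paper treats $\Sigma(f)$ as a single $0$-manifold, and that your formula for $\overline{\Phi}^{-1}$ has $df(v_1)+df(v_2)$ where the paper writes $df_x(v_1)-df_y(v_2)$; this discrepancy is a convention in the identification $TM\cong\nu_{\triangle_M}$ and does not affect the conclusion, since in either case the sign is $\varepsilon_{(x,y)}$ up to a universal factor independent of the double point.
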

\begin{proof}
For each double point $[x,y]\in\Sigma(f)$ we choose a lift $(x,y)\in \overline{\Sigma}(f)$, and paths $\gamma_x$ and $\gamma_y$ in $N$ from $n_0$ to $x$ and $y$ respectively. Then
\begin{equation}
\chi\,\tilde{\mu}_W(f) = [\Sigma(f) , \psi, \Upsilon]\in \Omega_0(P(f,f); \zeta)=\Omega_0(P(f,f); i^*\zeta_{\Z_2}),
\end{equation}
where $\psi[x,y] = (n_0 , f\gamma_x\cdot f\overline{\gamma_y}, n_0)$ and over $[x,y]\in \Sigma(f)$ the stable isomorphism
\begin{equation}
\Upsilon\co TM_{f(x)} \stab TN_{n_0}\oplus TN_{n_0}
\end{equation}
has sign $\varepsilon_{(x,y)}$ (see Section 4). So
\begin{equation}
i_*\,\chi_*\,\mu_W(f) = [\Sigma(f) , i\circ\psi, \Upsilon]\in \Omega_0(P(f,f)_{\Z_2}; \zeta_{\Z_2}).
\end{equation}
We next observe that the maps $i\circ \psi, \phi\co \Sigma(f) \to P(f,f)_{\Z_2}$, given by
\begin{equation}
i\circ\psi [x,y] = [e, (n_0 , f\gamma_x\cdot f\overline\gamma_y , n_0)],\qquad \phi[x,y] = [e(x,y), (x,c_{f(x)},y)]
\end{equation}
are homotopic. For each $[x,y]\in\Sigma(f)$ choose a path $\omega_{(x,y)}\co I\to E\Z_2$ from $e$ to $e(x,y)$. For any path $\gamma\co I\to N$ and $t\in I$ define a re-parameterized path $\gamma^t$ (whose image is $\gamma([t,1])$) by setting $\gamma^t(s) = \gamma\big( (1-t)s + t\big)$.  Now the desired homotopy $H\co \Sigma(f) \times I \to P(f,f)_{\Z_2}$ is defined by
\begin{equation}
H([x,y],t) = \bigg[ \omega_{(x,y)}(t), \bigg(\gamma_x(t) ,f\gamma_x^t \cdot f\overline{\gamma_y^t} , \gamma_y(t)\bigg)\bigg].
\end{equation}

By the first property of homotopy invariance of the bordism groups in Section 2, to complete the proof it suffices to check that for each double point $[x,y]\in\Sigma(f)$ the diagram of vector space isomorphisms
\begin{equation}
\xymatrix{
TM_{f(x)} \ar[d]_{\mathrm{id}} \ar[r]^-{\Upsilon} & TN_{n_0}\oplus TN_{n_0} \ar[d]_{\gamma_*} \\
TM_{f(x)} \ar[r]^-{\Phi} & TN_x\oplus TN_y
}
\end{equation}
commutes up to sign. Here the vertical isomorphisms are those induced by the homotopy $H$, and $\gamma_*$ stands for parallel transport along $\gamma_x$ on the first summand and $\gamma_y$ on the second. The isomorphism $\Phi$ is described in Section 3, and may be seen to have inverse given by $(v_1,v_2)\mapsto df_x(v_1) - df_y(v_2)$. The diagram commutes up to sign by the definition of $\varepsilon_{(x,y)}$. This completes the proof of the lemma, and of Theorem A.
\end{proof}

\section{The case $f\co N^n \imm M^{2n-1}$}

In this final section, we offer some speculative remarks concerning the case of an immersion $f\co N^n\imm M^{2n-1}$ where $N$ is closed and simply-connected and $M$ is connected. \\

In this case, if $n\geq 5$ then $f$ is regularly homotopic to an embedding if and only if the Hatcher-Quinn invariant
\begin{equation}
\mu(f)\in \Omega_1\big( P(f,f)_{\Z_2} ; \zeta_{\Z_2}\big)
\end{equation}
vanishes. We propose to investigate the vanishing of $\mu(f)$ using the Gysin sequence of the double cover $P(f,f)\simeq E\Z_2\times P(f,f)\to P(f,f)_{\Z_2}$ (see Section 2). We abbreviate $P= P(f,f)$, and look at the portion of this sequence
\begin{equation}\label{gysinseq}
\xymatrix{
\cdots \ar[r] &  \Omega_1\big(P;\zeta\big) \ar[r]^-{i_*} &  \Omega_1 \big(P_{\Z_2} ; \zeta_{\Z_2}\big) \ar[r]^-{e}  & \Omega_0\big(P_{\Z_2}; \zeta_{\Z_2}-\lambda\big) \ar[r] & \cdots}
\end{equation}
Here $\lambda$ is the line bundle associated to the double cover $E\Z_2\times P\to P_{\Z_2}$, and $e$ is the Euler mapping.

\begin{prop}\label{euler} Let $f\co N^n\imm M^{2n-1}$ be a self-transverse immersion, where $N$ is closed and simply-connected, $M$ is connected and $n\geq 3$. Consider the relaxed immersion
\begin{equation}
g= (f,0)\co N\imm M\times \R,\qquad g(n) = \big( f(n), 0\big).
\end{equation}
Then $e\big(\mu(f)\big) = 0$ if and only if $\mu(g) = 0$ if and only if $g$ is regularly homotopic to an embedding.
\end{prop}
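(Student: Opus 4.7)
The plan is to prove the stronger equality $e\big(\mu(f)\big) = \mu(g)$ in $\Omega_0(P(f,f)_{\Z_2};\zeta_{\Z_2}-\lambda)$, from which the first equivalence of the proposition follows immediately. For the second equivalence, it suffices to observe that $g\co N^n\imm (M\times\R)^{2n}$ satisfies the metastable range $2(2n)\geq 3(n+1)$ precisely when $n\geq 3$, so Theorem~\ref{HQ} applies directly to $g$.

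First, I would check that $e(\mu(f))$ and $\mu(g)$ live in the same group. The projection $\pi_M\co M\times\R\to M$ induces a $\Z_2$-equivariant homotopy equivalence $P(g,g)\stab P(f,f)$, under which I silently identify their Borel constructions. The Hatcher--Quinn virtual bundle for $g$ decomposes as $\zeta_g = \zeta - p^*\varepsilon^1$, where the trivial summand comes from $T\R$ and carries the involution $v\mapsto -v$ at path midpoints. Analyzing the associated sphere bundle over the Borel space shows $(p^*\varepsilon^1)_{\Z_2}\cong\lambda$, so $(\zeta_g)_{\Z_2}\cong\zeta_{\Z_2}-\lambda$.

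Next, I would compute $e(\mu(f))$ geometrically via the recipe of Section 2, which calls for a section of $\phi^*\lambda$ over $\Sigma(f)$. Since $\phi$ lifts to $\bar\phi\co\overline{\Sigma}(f)\to P(f,f)$, the bundle $\phi^*\lambda$ is the line bundle associated to the double cover $\overline{\Sigma}(f)\to\Sigma(f)$, and its sections correspond to $\Z_2$-antisymmetric functions on $\overline{\Sigma}(f)$. Any smooth $h\co N\to\R$ yields such a section $\tilde h$ via $(x,y)\mapsto h(x)-h(y)$, and for generic $h$ the perturbed immersion $g_h:=(f,h)$ is self-transverse and $\tilde h$ is transverse to the zero section, with common zero set $\{[x,y]\in\Sigma(f):h(x)=h(y)\}=\Sigma(g_h)$. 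The regular homotopy $(f,th)$ gives $\mu(g)=\mu(g_h)$, and the map $\Sigma(g_h)\to P(g,g)_{\Z_2}$ of Definition~\ref{HQdef} for $g_h$ corresponds, under $P(g,g)_{\Z_2}\simeq P(f,f)_{\Z_2}$, to the restriction of $\phi$ appearing in Euler's recipe.

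The main technical point, and the only place real care is needed, is matching the stable bundle isomorphisms. Restricting $\Phi$ from $\Sigma(f)$ to $\Sigma(g_h)$ and using $T\Sigma(f)|_{\Sigma(g_h)}\cong T\Sigma(g_h)\oplus\phi^*\lambda$ (by transversality of $\tilde h$) should recover $\Phi_{g_h}$, since both arise from the same canonical identifications involving normal bundles of diagonals; the extra $\phi^*\lambda$ summand produced on the Euler side matches exactly the summand appearing via $(p^*T(M\times\R))_{\Z_2} = (p^*TM)_{\Z_2}\oplus\lambda$ on the Hatcher--Quinn side for $g_h$. What remains is a diagram chase tracking signs and orientations through the $\Z_2$-involutions in the Borel construction. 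Once this compatibility is verified, the equality $e(\mu(f))=\mu(g)$ holds and the proposition follows.
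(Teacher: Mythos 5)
Your proposal is correct and follows essentially the same route as the paper: perturb to a self-transverse $g_h=(f,h)$, identify $\Sigma(g_h)$ with the zero set of the antisymmetric section $(x,y)\mapsto h(x)-h(y)$ of $\phi^*\lambda$, use the $\Z_2$-equivariant equivalence $P(g,g)\simeq P(f,f)$ together with $(\varepsilon^-)_{\Z_2}\cong\lambda$ to match coefficient bundles, and then check that the restricted framing $\Phi|_{\Sigma(g_h)}$ agrees with $\Phi_{g_h}$. The paper likewise only sketches this last compatibility, attributing it to the fact that both framings arise from the same embedding $\overline{\Sigma}(g_h)\hookrightarrow N\times N$, so your level of detail is on par with the original.
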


\begin{proof}
Let $h\co N\to \R$ be a smooth function such that the immersion
\begin{equation}
g'\co N\imm M\times \R,\qquad g'(n)=\big( f(n), h(n)\big)
\end{equation}
is self-transverse. Note that $g'$ is regularly homotopic to $g$, and so $\mu(g')=\mu(g)$. The anti-symmetric mapping
\begin{equation}\label{antisym}
\overline\varphi\co \overline{\Sigma}(f)\to \R,\qquad \overline\varphi (x,y)= h(x) -h(y)
\end{equation}
defines a section $\varphi\co \Sigma(f)\to \overline\Sigma(f)\times_{\Z_2} \R$ of the line bundle $\lambda_\pi$ associated to the double cover $\pi\co \overline\Sigma(f)\to \Sigma(f)$. The self-transversality of $g'$ implies that $\varphi$ is transverse to the zero section, and the zeroes of $\varphi$ are exactly the double points $\Sigma(g')\subseteq \Sigma(f)$ of $g'$.

Recall that $\mu(f) = [\Sigma(f), \phi,\Phi]$. The line bundle $\lambda_\pi$ can be identified with the pullback $\phi^*\lambda$. It follows from the description of the Euler mapping in Section 2 that
\begin{equation}
e\big( \mu(f)\big) = [\Sigma(g'), \phi|_{\Sigma(g')}, \Phi|_{\Sigma(g')}]\in \Omega_0(P_{\Z_2} ; \zeta_{\Z_2}- \lambda).
\end{equation}

We now investigate $\mu(g')$. The homotopy pullback $P'=P(g',g')$ comes with maps $p_1',p_2'\co P'\to N$, $p'\co P'\to M\times \R$, and
\begin{equation}
\mu(g')\in \Omega_0(P'_{\Z_2} ; \zeta'_{\Z_2}),\quad\mbox{where}\quad \zeta' = p_1'^*TN\oplus p_2'^*TN - p'^*T(M\times\R).
 \end{equation}
There is a canonical $\Z_2$-equivariant homotopy equivalence $\Pi\co P'\to P$ which projects a path in $M\times \R$ onto a path $M$, such that
\begin{equation}
p_1\Pi = p_1',\quad p_2\Pi=p_2',\quad\mbox{and}\quad p\Pi = \mathrm{pr}\, p',\quad\mbox{where }\mathrm{pr}\co M\times \R\to M.
\end{equation}
Let $\varepsilon^-_P$ be the trivial line bundle over $P$ with $\Z_2$-action $((x,\gamma,y),v)\mapsto ((y,\overline\gamma, x), -v)$, and note that $\lambda = (\varepsilon^-_P)_{\Z_2}$. It follows that $\Pi^*(\zeta_{\Z_2} - \lambda)\cong \zeta'_{\Z_2}$, and that $\Pi$ induces an isomorphism
\begin{equation}
\Pi_*\co \Omega_0\big( P'_{\Z_2}; \zeta'_{\Z_2}\big) \stab \Omega_0\big( P_{\Z_2}; \zeta_{\Z_2}-\lambda \big).
\end{equation}
We claim that $e\big( \mu(f)\big) = \Pi_*\mu(g')$, and hence that $e\big( \mu(f)\big)$ vanishes if and only if $\mu(g')=\mu(g)$ vanishes. By definition,
\begin{equation}
\mu(g') = [\Sigma(g'),\phi', \Phi']\in \Omega_0(P_{\Z_2} ; \zeta_{\Z_2}-\lambda),
\end{equation}
where $\phi'(x,y) = [e(x,y), (x, c_{g'(x)},y)]$ and
\begin{equation}
\Phi'\co T\Sigma(g')\oplus \phi'^*(p'^*T(M\times\R))_{\Z_2} \stab \phi'^*(p_1'^*TN\oplus p_2'^*TN)_{\Z_2}.
\end{equation}
The claim can be proved by noting that $\Pi\phi' = \phi|_{\Sigma(g')}$, and making the identifications
\begin{equation}
 p'^*T(M\times\R) \cong p'^*\mathrm{pr}^*(TM\oplus\varepsilon^-_M) \cong  \Pi^*(p^*TM\oplus\varepsilon^-_P),
\end{equation}
\begin{equation}
 p_1'^*TN\oplus p_2'^*TN = \Pi^*(p_1^* TN\oplus p_2^*TN),
 \end{equation}
 and noting that both $\Phi|_{\Sigma(g')}$ and $\Phi'$ arise from consideration of the embedding $i'\co \overline\Sigma(g')\hookrightarrow N\times N$.
\end{proof}

The question of when the relaxed immersion $g\co N\imm M\times\R$ is regularly homotopic to an embedding has been considered by Wall \cite[p.83]{Wa99} and by Sz\H ucs \cite[p.252]{Sz}, and turns out to depend on the nature of the double circles of the original immersion $f$.

Recall that the construction of Wall's invariant requires choosing an ordering of each double point. In the case of $f\co N^n\imm M^{2n-1}$, the double points are replaced by finitely many double circles $C\subseteq M$. Each double circle $C$ is doubly covered by its pre-image $\overline{C}\subseteq N$. Let us call a double circle $C$ {\em trivial} if the corresponding cover $\pi_C\co \overline{C}\to C$ is trivial, and {\em non-trivial} otherwise. An ordering of the double points now corresponds to a section of $\pi_C$ over each trivial double circle. As the next proposition shows, non-trivial double circles give a first obstruction to $f$ being regularly homotopic to an embedding.

\begin{prop}\label{Szucs}
Let $f\co N^n\imm M^{2n-1}$ be as in the statement of Proposition \ref{euler}. Then the relaxed immersion
\begin{equation}
g= (f,0)\co N\imm M\times \R,\qquad g(n) = \big( f(n), 0\big)
\end{equation}
is regularly homotopic to an embedding if and only if the number of non-trivial double circles of $f$ in each one-dimensional homotopy class of $M$ is even.
\end{prop}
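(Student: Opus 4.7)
The plan is to apply Proposition~\ref{euler}, reducing the problem to whether $e(\mu(f))$ vanishes in $\Omega_0\big(P_{\Z_2};\zeta_{\Z_2}-\lambda\big)$. I would then compute this Euler class by choosing a generic smooth $h\co N\to\R$ with $g'=(f,h)$ self-transverse, and (as in the proof of Proposition~\ref{euler}) representing it by
\[
e(\mu(f)) = [\Sigma(g'),\phi|_{\Sigma(g')},\Phi|_{\Sigma(g')}],
\]
where $\Sigma(g')=\varphi^{-1}(0)\subset\Sigma(f)$ is the zero set of the transverse section $\varphi$ of $\lambda_\pi\cong\phi^*\lambda$ determined by the anti-symmetric map \eqref{antisym}.

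The key step is to decompose $\Sigma(f)$ into its connected components (the double circles $C$ of $f$) and analyze the contribution of each to $e(\mu(f))$ separately. If $C$ is trivial then $\lambda_\pi|_C$ is a trivial line bundle, so $\varphi|_C\co S^1\to\R$ is a genuine function whose zeros cancel in signed pairs via null-bordisms supported in $C$; hence trivial double circles contribute zero. If $C$ is non-trivial then $\lambda_\pi|_C$ is the M\"obius line bundle over $S^1$, so $\varphi|_C$ has an odd number of transverse zeros, pairs of which may again be cancelled by arcs inside $C$, leaving a single surviving point whose class still needs to be identified.

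To finish, I would identify $\pi_0(P_{\Z_2})$ with the set of orbits $\{\sigma,\sigma^{-1}\}\subset\pi=\pi_1(M)$ (equivalently, unoriented free homotopy classes of loops in $M$, i.e., ``1-dimensional homotopy classes''), using Lemma~\ref{modules} and the fibre inclusion $\iota$. Each non-trivial double circle $C$ then determines the component of $P_{\Z_2}$ corresponding to the free homotopy class of the loop $\psi|_C\co C\to M$ and contributes the generator of a $\Z_2$ summand there. The vanishing of $e(\mu(f))$ thus reduces to the parity condition in the statement, and the conclusion follows.

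The main obstacle will be verifying the orientability claim in the last step: namely, that $\zeta_{\Z_2}-\lambda$ is non-orientable over the components of $P_{\Z_2}$ hit by non-trivial double circles, so that the corresponding summands of $\Omega_0(P_{\Z_2};\zeta_{\Z_2}-\lambda)$ are $\Z_2$'s and the contributions of Step~2 are the non-zero generators. This requires a sign computation patterned on Lemma~\ref{modules}, combining the sign $(-1)^n w(\sigma)$ of the $t$-action on $\zeta$ with the extra twist by $\lambda$, which is non-trivial precisely over the fixed orbits of the inversion action on $\pi$ — i.e., over those components hit by non-trivial double circles.
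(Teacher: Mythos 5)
Your approach is genuinely different from the paper's. The paper stays entirely with Wall's explicitly defined invariant $\mu_W(g')\in H_0(\Z_2;\Z[\pi])$ and deduces the result from Wall's product formula $\lambda(g',g') = \tilde\mu_W(g')+\overline{\tilde\mu_W(g')}+\chi(g')$, together with the observations that $\lambda(g',g')=0$ (the $\R$-factor separates the two copies) and $\chi(g')=0$. You instead work directly with the normal bordism picture, decomposing the zero set of the transverse section $\varphi$ of $\lambda_\pi$ circle by circle and tracking contributions to $\Omega_0(P_{\Z_2};\zeta_{\Z_2}-\lambda)$. The bordism-theoretic decomposition is attractive, and your cancellation arguments on individual circles (null-bordisms via arcs in $C$) and your identification of $\pi_0(P_{\Z_2})$ with $\pi/(\sigma\sim\sigma^{-1})$ are on the right track. (One small caveat: this is $\pi$ modulo the inversion involution, not the set of free homotopy classes — you should not conflate the two, since $\pi_0(P)\cong\pi_1(M,m_0)$ via the fibre inclusion, not the set of conjugacy classes.)

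The serious gap is in the final orientability claim, and it is not just a verification you have deferred — the claim as stated is false. Over a component of $P_{\Z_2}$ corresponding to a fixed $\sigma=\sigma^{-1}$, one finds that $w_1(\lambda)$ is always the generator, while $w_1(\zeta_{\Z_2})$ is the generator precisely when the sign of the involution $t_*$ on $\Omega_0(P_\sigma;\zeta)$ is $-1$. Crucially, for $f\co N^n\imm M^{2n-1}$ this sign is $(-1)^{n+1}w(\sigma)$, not the $(-1)^n w(\sigma)$ of Lemma~\ref{modules}: the computation there uses $\dim M=2n$, and the extra $(-1)^{\dim M}$ now contributes a minus sign. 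Putting this together, $\zeta_{\Z_2}-\lambda$ is non-orientable over the $\sigma$-component if and only if $(-1)^n w(\sigma)=-1$; when $(-1)^n w(\sigma)=+1$ the summand is $\Z$, not $\Z_2$. (This is exactly consistent with the identification $\Omega_0(P_{\Z_2};\zeta_{\Z_2}-\lambda)\cong\Omega_0(P(g',g')_{\Z_2};\zeta'_{\Z_2})\cong H_0(\Z_2;\Z[\pi])$ coming from Proposition~\ref{euler} and Theorem~A applied to $g'$.)

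So your Step~4 breaks down on the $\Z$ summands. A single non-trivial double circle $C$ with $(-1)^n w(\sigma_C)=+1$ contributes $\pm 1$ to a $\Z$, and parity is not enough to conclude; you would need to know the signs cancel, or better, that such circles occur in even number to begin with. The paper gets exactly this for free from Wall's formula: since $\lambda(g',g')=\chi(g')=0$, one has $\sum(\varepsilon_{(x,y)}+\varepsilon_{(y,x)})\sigma_{(x,y)}=0$, and every double point with $\sigma_{(x,y)}=\sigma$ and $(-1)^n w(\sigma)=+1$ satisfies $\varepsilon_{(x,y)}=\varepsilon_{(y,x)}$, so these are forced to cancel in pairs — in particular there is an even number of them in each such $\sigma$-class, and they contribute $0$. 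Your bordism-theoretic route has no substitute for this input as written. To rescue it you would need to prove a bordism-level analogue of Wall's product formula (e.g.\ showing that on $\Z$-components the contributions of the surviving points from distinct non-trivial circles cancel in pairs), which is essentially re-deriving the constraint the paper imports directly from \cite[Theorem 5.2]{Wa99}.
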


\begin{proof}
As in the proof of Proposition \ref{euler}, choose a smooth function $h\co N\to \R$ such that $g'=(f,h)\co N\imm M\times \R$
is self-transverse, and note that any such $g'$ is regularly homotopic to $g$. Hence $g$ is regularly homotopic to an embedding if and only if $0=\mu_{W}(g')\in H_0(\Z_2 ; \Z[\pi_1(M\times\R)])=H_0(\Z_2 ; \Z[\pi])$, by Theorem \ref{Wall}. The rest of the proof consists of an analysis of when the non-simply-connected Whitney trick can be applied to the relaxed immersion $g'$.

The double points of $g'$ coincide with the zeroes of the section of $\lambda_\pi$ induced by the anti-symmetric mapping
$\overline\varphi\co \overline{\Sigma}(f)\to \R$ (see (\ref{antisym})). We may assume $h$ was chosen to separate the two components of $\overline{C}$ for each trivial double circle $C\subseteq\Sigma(f)$. It follows that all the double points of $g'$ lie on non-trivial double circles, and that {\em the number of double points on each circle is odd.}

Let $[x,y]$ be a double point of $g'$ lying on a non-trivial double circle $C$ of $f$. Then $\sigma_{(x,y)}\in\pi_1(M,m_0)$ is the homotopy class of a path which travels from $m_0$ to $C$ along the image under $f$ of a path in $N$, then around $C$, then back to $m_0$ along the same path.
Clearly $\sigma_{(y,x)} = {\sigma_{(x,y)}}^{-1} = \sigma_{(x,y)}$ (since $C = f(\overline{C})$ and $N$ is simply-connected).

We now apply Wall's formula
 \begin{equation}
 \lambda(g',g') = \tilde\mu_{W}(g') + \overline{\tilde\mu_{W}(g')} + \chi(g')\in\Z[\pi],
 \end{equation}
 where $\lambda(g',g')\in\Z[\pi]$ is the (non-simply-connected) intersection number of $g'$ with a transverse approximation of $g'$, and $\chi(g')$ denotes the Euler number of $\nu_{g'}$ (see \cite[Theorem 5.2]{Wa99}). However $\lambda(g',g')=0$ (since $N$ is compact, the $\R$ coordinate in $M\times \R$ allows us to separate the two copies of $g'$) and $\chi(g')=0$ (since $\nu_{g'}\cong\nu_f\oplus\varepsilon^1$). Therefore
\begin{equation}
0 = \sum_{[x,y]\in \Sigma(g')} (\varepsilon_{(x,y)}+\varepsilon_{(y,x)})\sigma_{(x,y)}.
\end{equation}
Let $[x,y]\in \Sigma(g')$ with $\varepsilon_{(x,y)}=\varepsilon_{(y,x)}$. Then there must be another double point $[x',y']\in\Sigma(g')$ with $\varepsilon_{(x',y')}=\varepsilon_{(y',x')}=-\varepsilon_{(x,y)}$ and $\sigma_{(x',y')} = \sigma_{(x,y)}$ to cancel it. In this case these two double points contribute $0$ to $\tilde\mu_W(g')$, so may be ignored. The remaining double points have $\varepsilon_{(x,y)}= -\varepsilon_{(y,x)} = -(-1)^n w(\sigma_{(x,y)})\varepsilon_{(x,y)}$, and consequently $2[\sigma_{(x,y)}]=0\in H_0(\Z_2 ; \Z[\pi])$. Hence
\begin{equation}
\mu_W(g') = \left[ \sum_{[x,y]\in\Sigma(g')} \sigma_{(x,y)}\right]
\end{equation}
is zero if and only if the number of double points in each homotopy class is even. Since the number of double points on each non-trivial double circle is odd, the proposition follows.
\end{proof}

 Now if $n\geq 3$ and $f\co N^n \imm M^{2n-1}$ is an immersion such that $g\co N \imm M\times \R$ is regularly homotopic to an embedding, then the sequence (\ref{gysinseq}) tells us that the Hatcher-Quinn invariant $\mu(f)$ lifts non-uniquely to an element
\begin{equation}
 \overline{\mu}(f)\in \Omega_1( P(f,f) ; \zeta).
\end{equation}
The vanishing of $\overline{\mu}(f)$ is a sufficient condition for $f$ to be regularly homotopic to an immersion. The next result identifies the group $\Omega_1( P(f,f) ; \zeta)$ when $N$ is $2$-connected.

\begin{prop}
Let $f\co N^n\imm M^{2n-1}$, where $N$ is $2$-connected and $M$ is connected. There is an isomorphism of abelian groups
\begin{equation}\label{groups}
\chi\co\Omega_1(P(f,f) ;\zeta) \stab \bigoplus_{\sigma\in\pi_1(M,m_0)}\Z_2\times\pi_2(M,m_0).
\end{equation}
\end{prop}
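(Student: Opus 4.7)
The plan is to reduce, fiberwise, to framed bordism of the components of the based loop space of $M$, and then invoke stable homotopy theory. Since $N$ is $2$-connected, so is $N\times N$; from the long exact sequence of the fibration $(p_1,p_2)\co P(f,f)\to N\times N$, the fiber inclusion
\[
\iota\co \Lambda M\to P(f,f),\qquad \iota(\gamma)=(n_0,\gamma,n_0),
\]
will induce isomorphisms on $\pi_i$ for $i\leq 1$ and a surjection on $\pi_2$. Homotopy Invariance II from Section~2 then yields an isomorphism $\iota_*\co \Omega_1(\Lambda M;\iota^*\zeta) \stab \Omega_1(P(f,f);\zeta)$. Exactly as in the proof of Lemma \ref{modules}, $\iota^*\zeta = c^*TN \oplus c^*TN - \mathrm{ev}^*TM$ is stably trivial because the evaluation $\mathrm{ev}\co\Lambda M\to M$ is null-homotopic; this time the virtual dimension is $1$ rather than $0$. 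Decomposing $\Lambda M=\bigsqcup_{\sigma\in\pi_1(M,m_0)}\Lambda_\sigma M$ thus reduces matters to establishing
\[
\Omega_1(\Lambda_\sigma M;\varepsilon^1) \cong \Z_2\times\pi_2(M,m_0)
\]
for each $\sigma\in\pi_1(M,m_0)$.

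For this identification I would pass to stable homotopy. Since the coefficient bundle is stably trivial and orientable, $\Omega_1(\Lambda_\sigma M;\varepsilon^1)$ is a first framed bordism group, so Pontryagin-Thom identifies it with $\pi_1^s((\Lambda_\sigma M)_+)$. Pick a basepoint in $\Lambda_\sigma M$ (for instance any fixed loop in the free homotopy class $\sigma$); then the standard splitting $(\Lambda_\sigma M)_+\simeq \Lambda_\sigma M\vee S^0$ of pointed suspension spectra gives
\[
\pi_1^s((\Lambda_\sigma M)_+) \cong \tilde\pi_1^s(\Lambda_\sigma M)\oplus \pi_1^s(S^0) \cong \tilde\pi_1^s(\Lambda_\sigma M)\oplus \Z_2.
\]
Now $\Lambda_\sigma M$ is path-connected with $\pi_1(\Lambda_\sigma M)\cong\pi_2(M,m_0)$ already abelian, so the classical Hurewicz theorem gives $H_1(\Lambda_\sigma M)\cong\pi_2(M,m_0)$; the Atiyah-Hirzebruch spectral sequence for $\pi_*^s$ collapses in total degree~$1$ with only contribution $E^2_{1,0}=H_1(\Lambda_\sigma M)$, so $\tilde\pi_1^s(\Lambda_\sigma M)\cong\pi_2(M,m_0)$. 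Assembling these isomorphisms produces $\chi$.

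To match the concrete style of Lemma \ref{modules}, I would then want to describe $\chi$ geometrically: send a pair $(\sigma,\alpha)$ with $\alpha\in\pi_2(M,m_0)\cong\pi_1(\Lambda_\sigma M)$ to the class of a smoothly embedded circle in $\Lambda_\sigma M\subseteq P(f,f)$ representing $\alpha$, equipped with the framing coming from the stable trivialisation of $\iota^*\zeta|_{\Lambda_\sigma M}$, and send the generator of the $\Z_2$ summand to a constant loop decorated with the non-trivial (Lie-group) stable framing of $S^1$. The step I expect to be the main obstacle is verifying directly that this geometric recipe agrees with the abstract splitting above: independence of choices, surjectivity via Hurewicz, and in particular checking that the Lie-group framing of $S^1$ really does hit the generator of $\pi_1^s(S^0)=\Z_2$ under the Pontryagin-Thom identification all require some care.
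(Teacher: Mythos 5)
Your proposal is correct and follows essentially the same route as the paper: use $2$-connectivity of $N$ and Homotopy Invariance II to reduce to the fiber $\Lambda M$, observe that $\iota^*\zeta$ is trivial so the group is framed bordism, split over path components, and identify each $\Omega_1^{fr}(\Lambda_\sigma M)$ via the (split) low-degree Atiyah--Hirzebruch sequence together with the Hurewicz isomorphism $H_1(\Lambda_\sigma M)\cong\pi_1(\Lambda_\sigma M)\cong\pi_2(M,m_0)$. The paper phrases the last step as the split short exact sequence $0\to\Omega_1^{fr}(*)\to\Omega_1^{fr}(\Lambda_\sigma M)\to H_1(\Lambda_\sigma M)\to 0$ coming from the unreduced AHSS, which is the same computation as your stable-homotopy splitting $(\Lambda_\sigma M)_+\simeq \Lambda_\sigma M\vee S^0$; your closing paragraph about a geometric description of $\chi$ goes beyond what the paper establishes, which is only the abstract isomorphism.
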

\begin{proof}
Since $N$ is $2$-connected, the fibre inclusion $\iota\co \Lambda M \to P(f,f)$ induces an isomorphism $\iota_*\co \pi_i(\Lambda M)\to \pi_i(P(f,f))$ for $i=0,1$, and hence an isomorphism
\begin{equation}
\iota_*\co \Omega_1(\Lambda  M; \iota^*\zeta) \stab \Omega_1(P(f,f); \zeta).
\end{equation}
As noted in the proof of Lemma \ref{modules}, the virtual bundle $\iota^*\zeta$ is trivial. Hence we have isomorphisms
\begin{equation}
\Omega_1(\Lambda M;\iota^*\zeta)\cong \Omega_1^{fr}(\Lambda M) \cong \bigoplus_{\sigma\in\pi} \Omega_1^{fr}(\Lambda_\sigma M),
\end{equation}
where $\pi=\pi_1(M,m_0)$ and $\Omega_*^{fr}$ denotes the unreduced homology theory given by framed bordism (see \cite{Ko} or \cite{Dax}). The second isomorphism is given by the disjoint union axiom. The Atiyah-Hirzebruch spectral sequence for framed bordism gives a short exact sequence
\begin{equation}
0\to \Z_2 = \Omega^{fr}_1(*)\to  \Omega^{fr}_1(\Lambda_\sigma M) \to H_1(\Lambda_\sigma M ; \Z) \to 0
\end{equation}
which is split by the constant map $\Lambda_\sigma M\to *$. Since each path component $\Lambda_\sigma M$ is homotopy equivalent to the component $\Lambda_0 M$ of the constant loop, we therefore have isomorphisms
\begin{equation}
\Omega_1(P(f,f); \zeta)\cong\bigoplus_{\sigma\in\pi}\Z_2\times H_1(\Lambda_0 M ; \Z) \cong \bigoplus_{\sigma\in\pi} \Z_2\times \pi_2(M,m_0)
\end{equation}
(by the Hurewicz homomorphism and the fact that $\pi_1(\Lambda_0 M,m_0)\cong\pi_2(M,m_0)$ is abelian).
\end{proof}
 Hence when $n\geq 5$ and $N$ is $2$-connected, the image of a lift $\overline{\mu}(f)$ in a certain quotient of $\bigoplus_{\sigma\in\pi}\Z_2\times \pi_2(M,m_0)$ defines a complete obstruction to $f\co N^n\imm M^{2n-1}$ being regularly homotopic to an embedding.

\end{document}